%
%
%
%
\documentclass{amsart}

\newtheorem{theorem}{Theorem}[section] 
\newtheorem{lemma}[theorem]{Lemma}
\newtheorem{corollary}[theorem]{Corollary}

\theoremstyle{definition}

\newtheorem{example}[theorem]{Example}

\theoremstyle{remark}

\numberwithin{equation}{section}



\begin{document}

\title{Tempered Representations and Nilpotent Orbits}

\author{Benjamin Harris}
\address{Department of Mathematics, Massachusetts Institute of Technology, Cambridge, Massachusetts 02139}
\email{blharris@math.mit.edu}


\subjclass[2000]{Primary 22E46; Secondary 43A65, 22E45}

\date{August 21, 2011.}


\keywords{Tempered Representation, Discrete Series Representation, Wave Front Cycle, Associated Variety, Reductive Lie Group, Real Reductive Algebraic Group, Nilpotent Orbit, Distinguished Nilpotent Orbit, Coadjoint Orbit}

\begin{abstract}
Given a nilpotent orbit $\mathcal{O}$ of a real, reductive algebraic group, a necessary condition is given for the existence of a tempered representation $\pi$ such that $\mathcal{O}$ occurs in the wave front cycle of $\pi$. The coefficients of the wave front cycle of a tempered representation are expressed in terms of volumes of precompact submanifolds of an affine space. 
\end{abstract}

\maketitle

\section{Introduction}

\indent Let $G$ be a real, reductive algebraic group with Lie algebra $\mathfrak{g}$, and let $\pi$ be an irreducible, admissible representation of $G$. The wave front cycle of $\pi$, denoted $\text{WF}(\pi)$, is an integral linear combination of nilpotent coadjoint orbits defined in \cite{BV}, \cite{SV}. 

\begin{theorem} Suppose $\mathcal{O}$ is an orbit contained in $\text{WF}(\pi)$ for a tempered representation $\pi$, let $\nu\in \mathcal{O}$, and let $L$ be a Levi factor of $Z_G(\nu)$. Then $L/Z(G)$ is compact.
\end{theorem}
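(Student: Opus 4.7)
The strategy is to combine Harish-Chandra's parameterization of tempered representations via parabolic induction from discrete series with the orbit-method / asymptotic-cone description of the wave front cycle due to Rossmann and Schmid-Vilonen.

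First I would write $\pi$ as a constituent of $\mathrm{Ind}_P^G(\sigma \otimes e^{i\mu})$ for a cuspidal parabolic $P=MAN$, a discrete series $\sigma$ of $M$ with Harish-Chandra parameter $\lambda \in i\mathfrak{t}_M^*$, and $\mu \in \mathfrak{a}^*$; the Levi $M$ admits a compact Cartan subgroup $T_M$ by cuspidality of $P$. Rossmann's character formula expresses the global character of $\pi$ (after multiplication by a Weyl denominator) as a finite sum of Fourier transforms of canonical measures on the regular coadjoint orbits $G \cdot (w\lambda + i\mu)$ with $w$ in a Weyl-group coset. The Schmid--Vilonen theorem then identifies $\mathrm{WF}(\pi)$ with the asymptotic cone at $0$ of the union of these orbits. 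Consequently, any $\nu \in \mathcal{O} \subset \mathrm{WF}(\pi)$ arises as a limit $\nu = \lim_{n\to\infty} t_n \, \mathrm{Ad}^*(g_n)(\lambda+i\mu)$ with $t_n \to 0^+$.

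Next I would extract the compactness of $L/Z(G)$ from the structure of the prelimit orbit. The reductive part of the stabilizer $Z_G(\lambda+i\mu)$ is $T_M \cdot Z_A(\mu)$: the factor $Z_M(\lambda) = T_M$ is compact because $\lambda$ is regular elliptic in $\mathfrak{m}^*$, while $Z_A(\mu) \subset A$ is a split central torus of $M$. By an upper-semicontinuity argument for isotropy under the degeneration $t_n\,\mathrm{Ad}^*(g_n)(\lambda+i\mu) \to \nu$, a Levi factor $L$ of $Z_G(\nu)$ is contained, up to $G$-conjugacy, in the reductive part of $Z_G(\lambda+i\mu)$. The split directions coming from $A$ fit into the Jacobson--Morozov grading attached to $\nu$ and therefore land in the unipotent radical of $Z_G(\nu)$, not in its Levi. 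Hence only the compact factor $T_M$ contributes to $L$ beyond $Z(G)$, giving $L/Z(G)$ compact.

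The principal obstacle is making the upper-semicontinuity step rigorous, since Levi decompositions of centralizers do not behave continuously under degeneration of regular semisimple coadjoint elements to nilpotent ones. I would handle this either by a slice argument around $\nu$ using an $\mathfrak{sl}_2$-triple $(e,h,f)$ with $e = \nu$, or more cleanly by a two-step reduction: first prove the statement for discrete series directly from Rossmann's formula (where $A$ is trivial and the stabilizer of the Harish-Chandra parameter is manifestly compact), and then verify that the condition ``$L/Z(G)$ compact'' is preserved under Lusztig--Spaltenstein induction of nilpotent orbits along cuspidal parabolic subgroups. The key observation for the induction step is that the split torus $A \subset Z(M)$ is absorbed into the nilpotent radical of the induced centralizer, leaving the Levi factor unchanged modulo $Z(G)$.
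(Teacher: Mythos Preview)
Your central step---that a Levi factor $L$ of $Z_G(\nu)$ embeds, up to conjugacy, in $Z_G(\lambda+i\mu)$---fails. Since $\lambda+i\mu$ is regular, its centralizer is a Cartan subgroup and hence abelian, whereas $L$ need not be. For instance (Example~5.1), the noticed orbit in $\mathrm{U}(2,2)$ whose signed Young diagram has two rows $(+,-)$ has Levi factor isomorphic to $\mathrm{U}(2)$; this orbit does occur in the wave front cycle of a discrete series, yet the nonabelian group $\mathrm{U}(2)$ cannot embed in any torus. Semicontinuity of isotropy runs the wrong direction here: stabilizers \emph{grow} under degeneration to the nilpotent cone, so there is no mechanism forcing the reductive part of $Z_G(\nu)$ into the Cartan at the generic point. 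Your alternative two-step route does not escape this, since the discrete-series base case still rests on the same passage from ``$Z_G(\lambda)=T$ compact'' to ``$L/Z(G)$ compact,'' for which you have offered no independent argument; and for the induction step you assert but do not prove either that $\mathrm{WF}(\mathrm{Ind}_P^G\sigma)$ consists of Lusztig--Spaltenstein inductions of orbits in $\mathrm{WF}(\sigma)$ or that the split torus $A$ is absorbed into the unipotent radical of the induced centralizer.

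The paper's proof avoids semicontinuity entirely. After reducing to $\mathcal{O}_X$ open in $\mathcal{N}_{\lambda}$ with $\lambda$ regular semisimple, Corollary~2.2 gives that the slice intersection $\mathcal{O}_{\lambda}\cap S_X$ is \emph{compact}. The group $L=Z_G\{X,H,Y\}$ acts on this compact manifold; picking a closed $L$-orbit yields $L/Z_L(\lambda)$ compact, and since $Z_L(\lambda)$ sits in a Cartan of $L$ one gets $L/Z(L)$ compact. The remaining piece, that $Z(L)/Z(G)$ is compact, is handled by a separate algebraic lemma (Lemma~4.1): the intersection $\bigcap_{\xi}\bigl(Z(\mathfrak{l})\cap Z_{\mathfrak{l}}(\xi)\bigr)$, as $\xi$ ranges over an open set of slice points, is exactly $Z(\mathfrak{g})$. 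The engine of the argument is the compactness of the slices $\mathcal{O}_{\xi}\cap S_X$, not any comparison of stabilizers along a degeneration.
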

\indent This theorem was conjectured by David Vogan. It makes precise the widely held intuition that orbits occuring in the wave front cycles of tempered representations are necessarily large.\\
\\
\indent In \cite{BC} Bala and Carter attempted a uniform understanding of nilpotent orbits in complex semisimple Lie algebras. They observed that for every nilpotent orbit $\mathcal{O}$, there is an unique (up to conjugacy) maximal complex Levi subalgebra $\mathfrak{l}$ meeting $\mathcal{O}$. Moreover, $\mathcal{O}\cap \mathfrak{l}$ is a single orbit for the adjoint group of $\mathfrak{l}$. Bala and Carter defined a nilpotent orbit to be \emph{distinguished} if it does not meet a proper Levi subalgebra, and they observed that to parametrize nilpotent orbits it is enough to parametrize distinguished nilpotent orbits in every Levi subalgebra.\\  
\indent Motivated by the work of Bala and Carter, we define a nilpotent orbit for a real reductive algebraic group to be \emph{real distinguished} if it does not meet a proper Levi subalgebra. By Levi subalgebra, we mean the Levi factor of a real parabolic subalgebra. Using the notion of a \emph{real distinguished} nilpotent orbit, one could attempt a Bala-Carter style classification of real nilpotent orbits. No\"{e}l gave a different generalization of Bala-Carter theory for real nilpotent orbits using the notion of a \emph{noticed} nilpotent orbit which is different from the notion of a \emph{real distinguished} nilpotent orbit \cite{No}.\\ 
\\
\indent If $\mathcal{O}_{\nu}$ is a nilpotent orbit, then one checks that $Z_G(\nu)$ is compact modulo $Z(G)$ iff $\mathcal{O}_{\nu}$ is a real distinguished nilpotent orbit by utilizing the fact that every real Levi subalgebra can be written as the centralizer of a hyperbolic element. Thus, we have an alternate way of stating Theorem 1.1. 


\begin{corollary} If $\mathcal{O}\subset \mathfrak{g}^*$ is a nilpotent coadjoint orbit occurring in the wave front cycle of a tempered representation, then $\mathcal{O}$ does not meet $\mathfrak{m}^*$ for any proper Levi subalgebra $\mathfrak{m}\subset \mathfrak{g}$.
\end{corollary}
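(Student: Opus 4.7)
The plan is to deduce Corollary 1.2 directly from Theorem 1.1 via the equivalence already noted in the introduction: that $\mathcal{O}_{\nu}$ is noticed if and only if the Levi factor $L$ of $Z_{G}(\nu)$ is compact modulo $Z(G)$. Only one direction of the equivalence is actually required, namely that when $\mathcal{O}_{\nu}$ is \emph{not} noticed, $L/Z(G)$ fails to be compact. Combined with the contrapositive of Theorem 1.1, this yields the corollary.

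To establish this direction, I would argue by contrapositive. Suppose $\mathcal{O}$ meets $\mathfrak{m}^{*}$ for some proper Levi subalgebra $\mathfrak{m}\subset\mathfrak{g}$, and pick $\nu\in\mathcal{O}\cap\mathfrak{m}^{*}$. Let $M\subset G$ be the Levi subgroup with Lie algebra $\mathfrak{m}$, and let $A_{M}$ denote its $\mathbb{R}$-split central torus. Because $A_{M}$ is central in $M$, it acts trivially on $\mathfrak{m}^{*}$ under the coadjoint action, and in particular fixes $\nu$; hence $A_{M}\subset Z_{G}(\nu)$. Since $M$ is a proper Levi subgroup, $A_{M}$ is a positive-dimensional split torus not contained in $Z(G)$ --- indeed $M$ is recovered as the centralizer of $A_{M}$ in $G$, so $A_{M}\subset Z(G)$ would force $M=G$.

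Now $A_{M}$ is reductive, so by the Mostow decomposition of $Z_{G}(\nu)$ (in characteristic zero, any reductive subgroup is conjugate into any Levi factor via an element of the unipotent radical) we may assume $A_{M}\subset L$. The image of $A_{M}$ in $L/Z(G)$ is then a nontrivial $\mathbb{R}$-split torus, which is certainly non-compact, contradicting the conclusion of Theorem 1.1. Thus $\mathcal{O}$ cannot have met $\mathfrak{m}^{*}$ in the first place.

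The only step that requires any care is the Lie-theoretic observation that the split center of a proper \emph{real} Levi subgroup of $G$ is never contained in $Z(G)$; this is where the precise definition of Levi subalgebra (as the Levi factor of a real parabolic) enters, and where one must avoid confusing the real situation with the complex one. Once that structural point is in hand, together with the standard conjugacy of reductive subgroups into Levi factors, the corollary drops out of Theorem 1.1 with no additional representation-theoretic input.
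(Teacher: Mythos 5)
Your argument is correct and takes essentially the same route as the paper: the paper obtains the corollary from Theorem 1.1 by invoking the equivalence (stated as ``not hard to show'') between $\mathcal{O}_{\nu}$ being noticed and the Levi factor of $Z_G(\nu)$ being compact modulo $Z(G)$. What you do is simply supply a proof of the one direction of that equivalence the corollary actually needs --- using the split central torus $A_M$ of a proper Levi, the fact that $M=Z_G(A_M)$ forces $A_M\not\subset Z(G)$, and Mostow's conjugacy of reductive subgroups into a Levi factor of $Z_G(\nu)$ --- which is a clean and complete way to fill in the step the paper leaves to the reader.
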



Note that this corollary relates a set of nilpotent orbits that arises naturally in representation theory with a set of nilpotent orbits that arises naturally in structure theory.\\
\indent For $p$-adic groups, similar results were obtained much earlier. In \cite{M}, Moeglin shows that the $p$-adic analogue for supercuspidal representations follows from results of \cite{MW}. Similarly, Moeglin proves the result for tempered representations of classical $p$-adic groups in \cite{M}.\\
\\
\indent The purpose of this paper is to show that there is a simple proof of Theorem 1.1, which is in the spirit of the Kostant-Kirillov orbit method.\\
\\
\indent In \cite{R4} and \cite{R2}, Rossmann associated to each irreducible, tempered representation $\pi$ a finite union of regular coadjoint orbits we call $\mathcal{O}_{\pi}$. If $\pi$ has regular infinitesimal character, then $\mathcal{O}_{\pi}$ is a single coadjoint orbit. For each nilpotent coadjoint orbit, $\mathcal{O}\subset \mathfrak{g}^*$, fix $X\in \mathcal{O}$. Identify $\mathfrak{g}\cong \mathfrak{g}^*$ via a $G$-equivariant isomorphism, and let $\{X,H,Y\}$ be an $\mathfrak{sl}_2$-triple containing $X$. Put $S_X=X+Z_{\mathfrak{g}}(Y)$.

\begin{theorem} There exists a canonical measure on $S_X\cap \mathcal{O}_{\pi}$ such that
$$\text{WF}(\pi)=\sum_{\mathcal{O}_{\pi}\cap S_X\ \text{precompact}} \emph{vol}(\mathcal{O}_{\pi}\cap S_X)\mathcal{O}_X.$$
\noindent The sum is over nilpotent coadjoint orbits $\mathcal{O}_X$ such that $\mathcal{O}_{\pi}\cap S_X$ is precompact.
\end{theorem}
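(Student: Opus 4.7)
The plan is to combine Rossmann's integral formula for the character of $\pi$ with the interpretation of the wave front cycle as the leading-order asymptotic of the character at the origin. On a neighborhood of $0 \in \mathfrak{g}$, the pulled-back character $\theta_\pi \circ \exp$ equals (up to the usual Jacobian factor) the Fourier transform of the canonical invariant measure $\mu_{\mathcal{O}_\pi}$ on Rossmann's orbit. By Barbasch-Vogan and Schmid-Vilonen, the wave front cycle is detected by the dilation asymptotics
\[
t^{-1}\mathcal{O}_\pi \;\xrightarrow[t\to 0^+]{}\; \sum_{\mathcal{O}_X} c_{\mathcal{O}_X}\,\mu_{\mathcal{O}_X}
\]
as measures on $\mathfrak{g}^*$ (in a suitable weak sense), where $c_{\mathcal{O}_X}$ is the multiplicity of $\mathcal{O}_X$ in $\mathrm{WF}(\pi)$. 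The task is then to compute these limiting multiplicities explicitly, orbit by orbit.

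To do so, I would localize near $\mathcal{O}_X$ via the Kostant-Slodowy transverse slice. The Jacobson-Morozov triple $\{X,H,Y\}$ produces a contracting $\mathbb{R}^\times$-action on $S_X = X + Z_{\mathfrak{g}}(Y)$ combining the $\mathrm{ad}\,H$-grading with the dilation $\xi \mapsto t^2\xi$; under this action every $G$-orbit is preserved and $S_X$ contracts onto $\{X\}$. Kostant's theorem shows that $G \times S_X \to \mathfrak{g}^*$ is submersive onto a neighborhood of $\mathcal{O}_X$, so an invariant measure supported near $\mathcal{O}_X$ factors as the Liouville measure on $\mathcal{O}_X$ times a transverse measure on $S_X$. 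Applying this decomposition to the rescaled measures $t^{-1}\mathcal{O}_\pi$ and taking $t\to 0^+$ identifies $c_{\mathcal{O}_X}$ as the total transverse volume of $\mathcal{O}_\pi \cap S_X$, measured against the canonical form inherited from the symplectic volume on $\mathcal{O}_\pi$ by transverse intersection with $S_X$.

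The main obstacle will be the boundary case when $\mathcal{O}_\pi \cap S_X$ fails to be precompact. In that situation one must show that the mass escaping to infinity along $S_X$ produces no distributional contribution near $\mathcal{O}_X$, so that $c_{\mathcal{O}_X} = 0$ and the sum in the theorem is genuinely restricted to precompact intersections; this forces one to control the behavior of $\mathcal{O}_\pi$ at infinity inside $S_X$, presumably via the $\mathbb{R}^\times$-contraction. A related technical point is the precise normalization of the canonical measure on $\mathcal{O}_\pi \cap S_X$: one has to pin down the transverse measure so that its volume recovers the integer multiplicities guaranteed by Schmid-Vilonen, which amounts to a careful comparison of the Liouville form on $\mathcal{O}_\pi$ with the product of the Liouville form on $\mathcal{O}_X$ and the transverse form on the slice. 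This symplectic bookkeeping, together with the non-precompact case, is where the real work lies; the rest is a formal consequence of Rossmann's formula combined with the Schmid-Vilonen asymptotics.
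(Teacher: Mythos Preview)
Your strategy is essentially the paper's: Rossmann's character formula $\widehat{\mathcal{O}_\pi}=\theta_\pi$, followed by the Barbasch--Vogan limit computation on the slice $S_X$ via Rao's lemma (which is exactly your ``Liouville on $\mathcal{O}_X$ times transverse measure on $S_X$'' factorization). So the architecture is right, and the canonical transverse measure you describe is the measure $m_{\nu,X}$ of Lemma~2.1.

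There is, however, a genuine gap, and it is not where you think it is. You flag the non-precompact case as the main obstacle, but that direction is cheap: Corollary~2.2(b) already says $\mathcal{O}_\nu\cap S_X$ not precompact $\Leftrightarrow$ $\mathcal{O}_X$ not open in $\mathcal{N}_\nu$, hence $\mathcal{O}_X$ is not of maximal dimension and does not appear in the leading term $\mathrm{WF}(\pi)$. The subtle direction is the converse: you must rule out the possibility that $\mathcal{O}_\pi\cap S_X$ is precompact and \emph{nonempty} while $\mathcal{O}_X$ is \emph{not} of maximal dimension in $\operatorname{AS}(\pi)=\bigcup_i \mathcal{N}_{\nu_i}$. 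If that happened, your right-hand side would contain a positive term $\mathrm{vol}(\mathcal{O}_\pi\cap S_X)\,\mathcal{O}_X$ that does not occur in $\mathrm{WF}(\pi)$ (which records only the leading homogeneity), and the formula would be false. Nothing in the slice picture or in the $\mathbb{R}^\times$-contraction excludes this a priori: Corollary~2.2 tells you only that $\mathcal{O}_X$ is open in each $\mathcal{N}_{\nu_i}$ it meets, not that it has the same dimension as the top-dimensional orbits in the other $\mathcal{N}_{\nu_j}$. The paper closes this gap by invoking Rossmann's Theorem~D (\emph{Picard--Lefschetz Theory and Characters\ldots}), which asserts that $\operatorname{AS}(\pi)$ is the closure of its maximal-dimensional orbits; from this equidimensionality, open in $\operatorname{AS}(\pi)$ forces maximal dimension. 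You need an analogous input and should identify it explicitly.

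Two smaller points. First, the scaling is not $t^{-1}$: the correct exponent is $t^{-n}$ with $n=\tfrac{1}{2}(\dim\mathcal{O}_\nu-\dim\mathcal{N}_\nu)$, and it is precisely because different $\mathcal{O}_X$ give different exponents that only the maximal-dimensional ones survive in the leading term. Second, Schmid--Vilonen is not needed here; the Barbasch--Vogan asymptotic expansion of invariant eigendistributions already supplies everything once you have Rossmann's formula.
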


\indent If $\pi$ has regular infinitesimal character, then `precompact' may be replaced by `compact' in the above theorem. In the case where $G$ is compact, the wave front cycle of $\pi$ is $\dim(\pi)\cdot 0$ where $0$ denotes the zero orbit. In this case, our formula reduces to the well known observation of Kirillov that the symplectic volume of the coadjoint orbit associated to $\pi$ is the dimension of $\pi$ (see for instance page 173 of \cite{K}).\\
\\
\indent The reader will see in section four that the compactness in Theorem 1.1 is a consequence of the compactness of the manifold $\mathcal{O}_{\pi}\cap S_X$ in the case of regular infinitesimal character.\\
\\
\indent All of these results should be true for an arbitrary reductive Lie group. We only state them in the case of real, reductive algebraic groups because we use results of \cite{R4}, \cite{R2}, and \cite{R3}, which are only stated for real, reductive algebraic groups. However, the author believes that these results should be true for an arbitrary reductive Lie group.

\section{Rao's Lemma and three Corollaries of Barbasch and Vogan}

\indent Let $\mathcal{O}\subset \mathfrak{g}^*$ be a coadjoint orbit. The Kostant-Kirillov symplectic form $\omega$ is defined on $\mathcal{O}$ by the formula
$$\omega_{\lambda}(\text{ad}_X^*\lambda,\text{ad}_Y^*\lambda)=\lambda([X,Y]).$$
The top dimensional form
$$\frac{\omega^m}{m! (2\pi)^m}$$
on $\mathcal{O}$ gives rise to the canonical measure on $\mathcal{O}$. Here $m=\frac{\dim \mathcal{O}}{2}$. We will often abuse notation and write $\mathcal{O}$ for the orbit as well as the canonical measure on the orbit.\\
\indent In this section, we recall an unpublished lemma of Rao and three corollaries of Barbasch and Vogan. All of this material can be found on pages 46, 47, and 48 of \cite{BV}. However, unlike the previous treatment, we need to carefully keep track of certain constants for our applications. Thus, we provide updated statements, and for the convenience of the reader we provide sketches of updated proofs.\\
\indent Identify $\mathfrak{g}\cong \mathfrak{g}^*$ via a $G$-equivariant isomorphism. Let $\mathcal{O}_X$ be a nilpotent orbit in $\mathfrak{g}^*\cong \mathfrak{g}$, and let $\{X,H,Y\}$ be an $\mathfrak{sl}_2$-triple with nilpositive element $X$. Put $S_X=X+Z_{\mathfrak{g}}(Y)$. The map 
$$\phi: G\times S_X\rightarrow \mathfrak{g}^*$$ 
given by $\phi:(g,\xi)\mapsto g\cdot \xi$ is a submersion. In particular, every orbit $\mathcal{O}_{\nu}\subset G\cdot S_X$ is transverse to $S_X$, and $G\cdot S_X\subset \mathfrak{g}^*$ is open.\\
\indent Fix a Haar measure on $G$. This choice determines a Lebesgue measure on $\mathfrak{g}\cong \mathfrak{g}^*$. If $\xi\in S_X$, then we have a direct sum decomposition 
$$\mathfrak{g}=[\mathfrak{g},X]\oplus Z_{\mathfrak{g}}(Y)\cong T_X\mathcal{O}_X\oplus T_{\xi}S_X.$$ 
We then obtain a Lebesgue measure on $S_X$ as the `quotient' of the Lebesgue measure on $\mathfrak{g}$ and the canonical measure on $\mathcal{O}_X\subset \mathfrak{g}^*$. Further, given $\nu\in \mathfrak{g}^*$, denote by $\mathcal{F}_{\nu}$ the fiber over $\nu$ under the map $\phi$. If $g\cdot \xi=\nu$, then we have an exact sequence
$$0\rightarrow T_{\nu}^*(G\cdot S_X) \rightarrow T_{(g,\xi)}^*(G\times S_X)\rightarrow T_{(g,\xi)}^*\mathcal{F}_{\nu}\rightarrow 0.$$
This exact sequence together with the above remarks and our choice of Haar measure on $G$ determine a smooth measure on $\mathcal{F}_{\nu}$. Moreover, integration against these measures on the fibers of $\phi$ yields a continuous surjective map
$$\phi_*:\ C_c^{\infty}(G\times S_X)\longrightarrow C_c^{\infty}(G\cdot S_X).$$
\noindent Dualizing, we get an injective pullback map on distributions
$$\phi^*:\ D(G\cdot S_X)\rightarrow D(G\times S_X).$$
\noindent Now, we are ready to state Rao's lemma.

\begin{lemma} [Rao] If $\nu\in S_X$, then there exists a smooth measure $m_{\nu,X}$ on $\mathcal{O}_{\nu}\cap S_X$ such that 
$$\phi^*(\mathcal{O}_{\nu})=m_G\otimes m_{\nu,X}.$$
Here $m_G$ denotes the fixed choice of Haar measure on $G$. Although $\phi^*$ depends on this choice of Haar measure, $m_{\nu,X}$ does not.
\end{lemma}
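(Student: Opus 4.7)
The plan is to exploit the $G$-equivariance of $\phi$: let $G$ act on $G\times S_X$ by left translation on the first factor and on $\mathfrak{g}^*$ by the coadjoint action. Since $\mathcal{O}_\nu$ is $G$-stable and its canonical Kostant--Kirillov measure is $G$-invariant, the pullback $\phi^*(\mathcal{O}_\nu)$ is a left-$G$-invariant distribution on $G\times S_X$. A standard disintegration argument (unique factorization of left-invariant measures on a product where $G$ acts only on the first factor) then forces $\phi^*(\mathcal{O}_\nu)=m_G\otimes m_{\nu,X}$ for a distribution $m_{\nu,X}$ on $S_X$; this supplies the product form.

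Next I would identify the support and regularity of $m_{\nu,X}$. Since $\mathcal{O}_\nu$ is $G$-invariant, $\phi^{-1}(\mathcal{O}_\nu)=G\times(\mathcal{O}_\nu\cap S_X)$, so $m_{\nu,X}$ lives on $\mathcal{O}_\nu\cap S_X$. The submersion hypothesis tells us that $\mathcal{O}_\nu$ meets $S_X$ transversally, so the intersection is a smooth submanifold; working in local slice coordinates adapted to the direct sum decomposition $\mathfrak{g}=[\mathfrak{g},X]\oplus Z_{\mathfrak{g}}(Y)$ near a point of $\mathcal{O}_\nu\cap S_X$, one checks that the pullback of the smooth measure $\mathcal{O}_\nu$ gives $m_{\nu,X}$ a smooth density against the induced Lebesgue measure on $\mathcal{O}_\nu\cap S_X$.

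For the independence of $m_{\nu,X}$ from the choice of Haar measure, I would run a scaling check. If $m_G$ is replaced by $c\cdot m_G$, the induced Lebesgue measure on $\mathfrak{g}\cong\mathfrak{g}^*$ (defined via $\exp$) scales by $c$, and, because the canonical measure on $\mathcal{O}_X$ is intrinsic, the quotient Lebesgue measure on $S_X$ also scales by $c$. Hence the product measure on $G\times S_X$ scales by $c^2$ and, from the exact sequence determining the fibers $\mathcal{F}_\nu$, the fiber measure scales by $c^2/c=c$. So $\phi_*$ scales by $c$, $\phi^*(\mathcal{O}_\nu)$ scales by $c$, and $m_G\otimes m_{\nu,X}$ also scales by $c$; uniqueness of the factorization forces $m_{\nu,X}$ to be unchanged.

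The main obstacle will be the measure-theoretic bookkeeping: to justify that $\phi^*(\mathcal{O}_\nu)$ really is a measure (not merely a distribution), one must carefully match the quotient measure coming from the short exact sequence of cotangent spaces with the concrete product $m_G\otimes m_{\nu,X}$, and then verify that the resulting density on $\mathcal{O}_\nu\cap S_X$ is smooth. Once that local identification is in hand, both the product form and the independence from $m_G$ follow essentially by inspection.
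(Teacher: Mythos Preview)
Your approach is correct, but it proceeds along a different route from the paper's treatment. You argue abstractly: the $G$-equivariance of $\phi$ and the $G$-invariance of the canonical measure on $\mathcal{O}_\nu$ force $\phi^*(\mathcal{O}_\nu)$ to be left-$G$-invariant on $G\times S_X$, and uniqueness of Haar measure then yields the factorization $m_G\otimes m_{\nu,X}$; independence from the choice of $m_G$ is handled by a homogeneity/scaling check. By contrast, the paper does not argue for the product form via invariance; instead it \emph{constructs} $m_{\nu,X}$ directly. Using the surjection $T_\nu\mathcal{O}_\nu\to T_X\mathcal{O}_X$ coming from transversality of $\mathcal{O}_\nu$ and $S_X$, it passes to the exact sequence
\[
0\longrightarrow T_X^*\mathcal{O}_X\longrightarrow T_\nu^*\mathcal{O}_\nu\longrightarrow T_\nu^*(\mathcal{O}_\nu\cap S_X)\longrightarrow 0
\]
and defines $m_{\nu,X}$ as the quotient of the canonical form on $\mathcal{O}_\nu$ by the canonical form on $\mathcal{O}_X$ (up to sign). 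This makes the independence from $m_G$ immediate, since only the intrinsic Kirillov measures enter; and it is precisely this explicit description that the paper invokes later, in the proof of Corollary~2.3, to verify the identity $t^{-m}\operatorname{vol}(\mathcal{O}_{t\nu}\cap S_X)=\operatorname{vol}(\mathcal{O}_\nu\cap S_X)$. Your argument has the complementary virtue of actually \emph{proving} that $\phi^*(\mathcal{O}_\nu)$ admits a product decomposition, which the paper essentially takes from Rao's unpublished note; the cleanest account would combine your invariance argument for existence with the paper's explicit quotient formula for the resulting measure.
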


\indent One can write down $m_{\nu,X}$ by giving a top dimensional form on $\mathcal{O}_{\nu}\cap S_X$, well-defined up to sign. Essentially, we just divide the canonical measure on $\mathcal{O}_{\nu}$ by the canonical measure on $\mathcal{O}_X$. More precisely, the composition of the inclusion $[\mathfrak{g},\nu]\hookrightarrow \mathfrak{g}$ and the projection defined by the decomposition $\mathfrak{g}=[\mathfrak{g},X]\oplus Z_{\mathfrak{g}}(Y)$ yields a map
$$T_{\nu}\mathcal{O}_{\nu}\cong [\mathfrak{g},\nu]\rightarrow [\mathfrak{g},X]\cong T_X\mathcal{O}_X.$$
This map is a surjection because $\mathcal{O}_{\nu}$ is transverse to $S_X$. It pulls back to an exact sequence
$$0\rightarrow T_X^*\mathcal{O}_X\rightarrow T_{\nu}^*\mathcal{O}_{\nu}\rightarrow T^*_{\nu}(\mathcal{O}_{\nu}\cap S_X)\rightarrow 0.$$
\noindent The canonical measures on $\mathcal{O}_{\nu}$ and $\mathcal{O}_X$ determine top dimensional alternating tensors up to sign on $T_{\nu}^*\mathcal{O}_{\nu}$ and $T_X^*\mathcal{O}_X$. Hence, our exact sequence gives a top dimensional, alternating tensor on $T^*_{\nu}(\mathcal{O}_{\nu}\cap S_X)$, well-defined up to sign.\\
\\ 
\indent We will need three corollaries of Barbasch and Vogan. Let $\mathcal{N}\subset \mathfrak{g}^*$ be the nilpotent cone. If $\nu\in \mathfrak{g}^*$, define
$$\mathcal{N}_{\nu}=\mathcal{N}\cap \overline{\cup_{t>0} \mathcal{O}_{t\nu}}.$$

\begin{corollary} [Barbasch and Vogan] We have four statements.\\
(a) If $\mathcal{O}_X$ is a nilpotent orbit, then $\mathcal{O}_X\subset \mathcal{N}_{\nu}$ if, and only if $\mathcal{O}_{\nu}\cap S_X\neq \emptyset$.\\
(b) An orbit $\mathcal{O}_X\subset \mathcal{N}_{\nu}$ is open if, and only if $\mathcal{O}_{\nu}\cap S_X$ is precompact.\\
(c) If $\nu$ is semisimple, then $\mathcal{O}_X\subset \mathcal{N}_{\nu}$ is open if, and only if $\mathcal{O}_{\nu}\cap S_X$ is compact.\\
(d) Further, $\mathcal{O}_X\cap S_X=\{X\}$ for any nilpotent orbit $\mathcal{O}_X$.
\end{corollary}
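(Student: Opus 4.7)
\emph{Plan.} The whole argument is organized around one contracting $\mathbb{R}_{>0}$-action on $S_X$. Put $\lambda(t)=\exp((\log t)H)\in G$ and
$$\tilde\rho_t(\xi)\ =\ t^{-2}\,\mathrm{Ad}^*(\lambda(t))\,\xi.$$
Since $\mathrm{Ad}^*(\lambda(t))X=t^2X$ and since $Z_{\mathfrak g}(Y)$ is a direct sum of $H$-weight spaces with weights $\le 0$, the map $\tilde\rho_t$ preserves $S_X$, fixes $X$, and satisfies $\tilde\rho_t(\xi)\to X$ as $t\to\infty$ for every $\xi\in S_X$. It also sends $\mathcal{O}_\mu$ to $\mathcal{O}_{t^{-2}\mu}$, so it provides a bijection $\mathcal{O}_\nu\cap S_X\to \mathcal{O}_{t^{-2}\nu}\cap S_X$ for every $t>0$. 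I will combine these properties with the fact, established above via the submersion property of $\phi$, that $G\cdot S_X$ is an open neighborhood of $\mathcal{O}_X$.

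Statements (a) and (d) follow quickly. For (a), if $\xi\in \mathcal{O}_\nu\cap S_X$ then $\tilde\rho_t(\xi)\in \mathcal{O}_{t^{-2}\nu}$ flows to $X$ as $t\to\infty$, so $\mathcal{O}_X\subset\mathcal{N}_\nu$; conversely, if $\mathcal{O}_X\subset\mathcal{N}_\nu$ then a net in $\bigcup_s \mathcal{O}_{s\nu}$ approaching $X$ eventually enters the open set $G\cdot S_X$, so some $\mathcal{O}_{s\nu}\cap S_X\ne\emptyset$ and the bijection from $\tilde\rho$ gives $\mathcal{O}_\nu\cap S_X\ne\emptyset$. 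For (d), transversality of $\mathcal{O}_X$ to $S_X$ at $X$ forces $\mathcal{O}_X\cap S_X$ to be a discrete submanifold; both $\mathcal{O}_X$ and $S_X$ are $\tilde\rho_t$-invariant (the former because $t^{-2}X$ is $G$-conjugate to $X$), so by continuity and connectedness of $\mathbb{R}_{>0}$ the action of $\tilde\rho$ on $\mathcal{O}_X\cap S_X$ is trivial, and then $\tilde\rho_t(\xi)\to X$ forces each $\xi\in\mathcal{O}_X\cap S_X$ to equal $X$. Statement (c) is immediate from (b) because for semisimple $\nu$ the orbit $\mathcal{O}_\nu$ is closed in $\mathfrak g^*$, making $\mathcal{O}_\nu\cap S_X$ closed, and ``closed plus precompact'' is ``compact''.

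The main content is therefore (b). Suppose $\mathcal{O}_\nu\cap S_X$ is not precompact, and pick $\xi_n\in \mathcal{O}_\nu\cap S_X$ whose $Z_{\mathfrak g}(Y)$-components diverge in norm. Choose $t_n\to\infty$ so that the $Z_{\mathfrak g}(Y)$-component of $\tilde\rho_{t_n}(\xi_n)$ has norm exactly $1$ (possible by the intermediate value theorem, since every weight of $Z_{\mathfrak g}(Y)$ for the $\tilde\rho$-action is strictly negative); passing to a subsequence, $\tilde\rho_{t_n}(\xi_n)\to \xi_\infty\in S_X$ with $\xi_\infty\ne X$. Because $\tilde\rho_{t_n}(\xi_n)\in \mathcal{O}_{t_n^{-2}\nu}$ and $t_n^{-2}\nu\to 0$, every nonconstant $G$-invariant vanishes on $\xi_\infty$, so $\xi_\infty$ is nilpotent; thus $\xi_\infty\in \mathcal{O}_{X'}\cap S_X$ for some nilpotent orbit $\mathcal{O}_{X'}\subset \mathcal{N}_\nu$ (by $G$-invariance of $\mathcal{N}_\nu$), and part (d) together with $\xi_\infty\ne X$ rules out $\mathcal{O}_{X'}=\mathcal{O}_X$. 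A second application of $\tilde\rho_t$, now to $\xi_\infty$, stays inside the nilpotent orbit $\mathcal{O}_{X'}$ while driving the result to $X$, so $X\in \overline{\mathcal{O}_{X'}}$, hence $\mathcal{O}_X\subsetneq \overline{\mathcal{O}_{X'}}\subset \mathcal{N}_\nu$ and $\mathcal{O}_X$ is not open in $\mathcal{N}_\nu$. The converse runs analogously, using (a) and the contracting flow attached to $X'$ to produce an unbounded family in $\mathcal{O}_\nu\cap S_X$. The main obstacle is the direction just given: one has to normalize the divergent sequence in $\mathcal{O}_\nu\cap S_X$ so that the limit $\xi_\infty$ is different from $X$, show that $\xi_\infty$ is genuinely nilpotent, and then recover the closure relation $\mathcal{O}_X\subset \overline{\mathcal{O}_{X'}}$; the contracting action $\tilde\rho$, the openness of $G\cdot S_X$, and the invariant-theoretic argument for nilpotency of $\xi_\infty$ are exactly what make this work.
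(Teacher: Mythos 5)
Your proof is correct and runs along essentially the same lines as the paper's sketch: both arguments are built on the one-parameter contraction of $S_X$ coming from $\exp(sH)$ (you call it $\tilde\rho_t$, the paper writes $\mathcal{O}_{t\nu}\cap S_X = X + t\gamma_t(\mathcal{O}_\nu\cap S_X - X)$), together with openness of $G\cdot S_X$ and the description of $(G\cdot S_X)\cap\mathcal N$ as the union of orbits whose closures contain $\mathcal{O}_X$. Your treatment of (a) and (c) matches the paper. For (d) you give a slightly different and arguably cleaner argument (discreteness from transversality plus connectedness of the $\mathbb{R}_{>0}$-orbit forces the flow to fix each point of $\mathcal{O}_X\cap S_X$, then contraction to $X$), where the paper simply specializes the bounded/unbounded dichotomy to $\nu=X$. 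For (b), your ``not precompact $\Rightarrow$ not open'' direction is well executed, and in fact fills in the ``straightforward calculation'' the paper leaves implicit: the IVT normalization so that $\tilde\rho_{t_n}(\xi_n)$ converges to some $\xi_\infty\in S_X\setminus\{X\}$, the invariant-polynomial argument that $\xi_\infty$ is nilpotent, and the use of (d) to rule out $\mathcal{O}_{X'}=\mathcal{O}_X$ are all on point. The one place you are thinner than the paper is the converse: ``$\mathcal{O}_X$ not open $\Rightarrow$ $\mathcal{O}_\nu\cap S_X$ unbounded.'' Your one-line sketch (``contracting flow attached to $X'$'') does not quite match what is needed — the natural route is the paper's intermediate statement, namely that $\mathcal{O}_\nu\cap S_X$ bounded forces $S_X\cap\mathcal{N}_\nu=\{X\}$, after which openness of $\mathcal{O}_X$ is immediate from the orbit structure of $(G\cdot S_X)\cap\mathcal N$. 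To prove that intermediate statement you contract points of $\mathcal{O}_{t\nu}\cap S_X$ as $t\to 0^+$, and you must check (using invariants, as you did in the forward direction) that the relevant parameters do go to $0$; the case of nilpotent $\nu$ needs a separate line. None of this changes the verdict that the proposal is sound and follows the paper's route, but the converse of (b) is the genuinely delicate half and deserves the same level of care you gave the forward half.
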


\indent We sketch a proof. Note $G\cdot S_X\subset \mathfrak{g}^*$ is an open subset containing $\mathcal{O}_X$; thus, $\mathcal{O}_X\subset \mathcal{N}_{\nu}$ iff $\mathcal{O}_{t\nu}\cap S_X\neq \emptyset$ for sufficiently small $t>0$. However, if $\gamma_t=\text{exp}(-\frac{1}{2}(\log(t))H)$, then  $$\mathcal{O}_{t\nu}\cap S_X=X+t\gamma_t(\mathcal{O}_{\nu}\cap S_X-X).$$ In particular, $\mathcal{O}_{\nu}\cap S_X \neq \emptyset$ iff $\mathcal{O}_{t\nu}\cap S_X\neq \emptyset$ for any $t>0$. This verifies part (a).\\
\indent For parts (b) and (c), one shows that $\mathcal{O}_{\nu}\cap S_X$ bounded implies that $S_X\cap \mathcal{N}_{\nu}=\{X\}$ and $\mathcal{O}_{\nu}\cap S_X$ unbounded implies that $S_X\cap \mathcal{N}_{\nu}$ is unbounded. This follows from a straightforward calculation utilizing the $\text{ad}_H$-decomposition of $Z_{\mathfrak{g}}(Y)$ into eigenspaces with non-positive eigenvalues and the above relationship between $\mathcal{O}_{\nu}\cap S_X$, $\mathcal{O}_{t\nu}\cap S_X$, and $\gamma_t$. Using that $(G\cdot S_X)\cap \mathcal{N}$ is the union of nilpotent orbits containing $\mathcal{O}_X$ in their closures, parts (b) and (c) follow.\\
\indent If we let $\nu=X$ in the last paragraph, we arrive at part (d). 

\begin{corollary} [Barbasch and Vogan] Let $n=\frac{1}{2}(\dim\mathcal{O}_{\nu}-\dim \mathcal{N}_{\nu})$. Then 
$$\lim_{t\rightarrow 0^+} t^{-n}\mathcal{O}_{t\nu}=\sum_{\substack{\mathcal{O}_X\subset \mathcal{N}_{\nu}\\ \dim\mathcal{O}_X=\dim\mathcal{N}_{\nu}}} \emph{vol}(\mathcal{O}_{\nu}\cap S_X)\mathcal{O}_X.$$
The volumes are computed with respect to the measures defined in Lemma 2.1. Moreover, the Fourier transform of the right hand side is the first non-zero term in the asymptotic expansion of the generalized function $\widehat{\mathcal{O}_{\nu}}$.
\end{corollary}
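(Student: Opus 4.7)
The plan is to test both sides against $f \in C_c^\infty(\mathfrak{g}^*)$ and localize using Rao's lemma. By a partition of unity I may assume $f$ is supported in $G \cdot S_X$ for a single choice of nilpotent $X$; the contribution from test functions supported away from $\mathcal{N}$ cannot survive the limit because $\mathcal{O}_{t\nu}$ becomes concentrated near $\mathcal{N}_\nu$ as $t \to 0^+$. For such $f$, Lemma 2.1 rewrites
\[
\int f\,d\mathcal{O}_{t\nu} = \int_G \psi(g) \left( \int_{\mathcal{O}_{t\nu} \cap S_X} f(g\xi)\,dm_{t\nu,X}(\xi) \right) dg
\]
for a suitable $\psi \in C_c^\infty(G)$, reducing the problem to the asymptotic behavior of the slice measure $m_{t\nu,X}$.

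The key computation is a change of variables via the diffeomorphism $\Psi_t(\xi) = t\gamma_t \xi$, which by the identity from the proof of Corollary 2.2 maps $\mathcal{O}_\nu \cap S_X$ bijectively onto $\mathcal{O}_{t\nu} \cap S_X$ and fixes $X$. Writing $\Psi_t = \sigma_t \circ \gamma_t$ with $\sigma_t$ the dilation on $\mathfrak{g}^*$, the Kirillov formula gives $\sigma_t^*\omega_{t\nu} = t\omega_\nu$ while $\gamma_t \in G$ preserves the Kirillov forms; hence $\Psi_t^*\alpha_{t\nu} = t^m \alpha_\nu$ and $\Psi_t^*\alpha_X = t^{m_X}\alpha_X$. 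Inserting these two identities into the exact sequence that defines $m_{t\nu,X}$ in Lemma 2.1 yields
\[
\Psi_t^* m_{t\nu,X} = t^{m - m_X}\,m_{\nu,X}.
\]
When $\dim \mathcal{O}_X = \dim \mathcal{N}_\nu$ the exponent equals $n$, and Corollary 2.2(b) makes $\mathcal{O}_\nu \cap S_X$ precompact, so $\Psi_t$ contracts it uniformly onto $X$ as $t \to 0^+$. Substituting $\xi = \Psi_t \xi'$ gives
\[
t^{-n}\int_{\mathcal{O}_{t\nu} \cap S_X} f(g\xi)\,dm_{t\nu,X} = \int_{\mathcal{O}_\nu \cap S_X} f(g\Psi_t\xi')\,dm_{\nu,X} \longrightarrow f(gX)\cdot \operatorname{vol}(\mathcal{O}_\nu \cap S_X),
\]
and integration against $\psi(g)\,dg$ together with Lemma 2.1 applied to the nilpotent orbit $\mathcal{O}_X$ (where $m_{X,X} = \delta_X$ by Corollary 2.2(d)) converts the right side into $\operatorname{vol}(\mathcal{O}_\nu \cap S_X)\int f\,d\mathcal{O}_X$. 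For non-open $\mathcal{O}_X \subset \mathcal{N}_\nu$ the same recipe produces $t^{m-m_X}$ with $m - m_X > n$, so these contributions vanish after multiplication by $t^{-n}$.

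For the Fourier transform assertion I would combine the rescaling $\widehat{\mathcal{O}_{t\nu}}(Z) = t^m \widehat{\mathcal{O}_\nu}(tZ)$, obtained by a direct change of variable, with the homogeneity $\widehat{\mathcal{O}_X}(sZ) = s^{-m_X}\widehat{\mathcal{O}_X}(Z)$: under the substitution $n = m - m_X$ the small-$t$ limit statement for $\mathcal{O}_{t\nu}$ translates term by term into the large-argument leading asymptotic of $\widehat{\mathcal{O}_\nu}$. The main obstacle I anticipate is the careful bookkeeping in the scaled exact sequence to extract precisely $t^{m-m_X}$ with no off-by-one factors, together with a separate control argument for non-open orbits where $\mathcal{O}_\nu \cap S_X$ is not precompact: there one must verify that the $m_{\nu,X}$-measure of $\Psi_t^{-1}(\operatorname{supp} f \cap S_X)$ does not grow rapidly enough to overcome the strictly positive excess power $t^{m-m_X-n}$.
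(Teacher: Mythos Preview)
Your local computation on a single open set $G\cdot S_X$ is exactly what the paper does: the scaling identity $\Psi_t^*m_{t\nu,X}=t^{m-m_X}m_{\nu,X}$ is the ``straightforward computation'' the paper invokes, and for open $\mathcal{O}_X\subset\mathcal{N}_\nu$ the precompactness of $\mathcal{O}_\nu\cap S_X$ plus Corollary~2.2(d) gives $\lim_{t\to 0^+}t^{-n}m_{t\nu,X}=\operatorname{vol}(\mathcal{O}_\nu\cap S_X)\,\delta_X$ just as you describe.

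Where your proposal diverges from the paper is in the globalization, and this is where the genuine gap sits. You try to pass from the local statement to the global one by a partition of unity indexed by the various $G\cdot S_X$, and then argue that the pieces coming from non-open $\mathcal{O}_X\subset\mathcal{N}_\nu$ contribute $0$ because they carry the strictly larger power $t^{m-m_X}$. But, as you yourself flag, for non-open $\mathcal{O}_X$ the slice $\mathcal{O}_\nu\cap S_X$ is \emph{not} precompact, so the integral $\int_{\mathcal{O}_\nu\cap S_X} f(g\Psi_t\xi')\,dm_{\nu,X}$ is taken over the preimage $\Psi_t^{-1}(\operatorname{supp} f\cap S_X)$, which expands without bound as $t\to 0^+$. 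Showing that this growth is beaten by $t^{m-m_X-n}$ is not a bookkeeping issue; it is a genuine temperedness estimate on $m_{\nu,X}$ that you have not supplied, and it is not obvious how to obtain it from the ingredients on the table.

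The paper sidesteps this difficulty entirely. Rather than estimating the non-open contributions directly, it invokes two external structural results from \cite{BV}: Theorem~3.2 there guarantees a priori that $\widehat{\mathcal{O}_\nu}$ has an asymptotic expansion $t^r\widehat{\mathcal{O}_{t\nu}}\sim t^lD_l+\cdots$, so that $\lim_{t\to 0^+}t^{-(l-r)}\mathcal{O}_{t\nu}$ exists globally; and Corollary~3.9 there (a homogeneity constraint on invariant nilpotent-supported distributions) is used twice, first to pin down $l-r=n$ and then to show that the global limit cannot differ from $\sum\operatorname{vol}(\mathcal{O}_\nu\cap S_X)\,\mathcal{O}_X$ by anything supported on lower-dimensional orbits. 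In other words, the paper proves the limit exists and has the right homogeneity globally by soft arguments, and only uses the explicit Rao--slice computation on the top-dimensional pieces to identify the coefficients. Your direct approach would be more self-contained if it worked, but as written it is incomplete precisely at the step the paper's method is designed to avoid.
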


\indent Again, we sketch a proof. 
Fix $X$, a nilpotent element with $\mathcal{O}_X\subset \mathcal{N}_{\nu}$ open, and let $m=\frac{1}{2}\left(\dim \mathcal{O}_{\nu}-\dim \mathcal{O}_X\right)$. We first show 
$$\lim_{t\rightarrow 0^+} t^{-m}\mathcal{O}_{t\nu}= \text{vol}(\mathcal{O}_{\nu}\cap S_X)\mathcal{O}_X$$
on the open set $G\cdot S_X$. By Rao's lemma, $\phi^*(\mathcal{O}_{\nu})=m_G\otimes m_{\nu,X}$. Thus, it is enough to show $$\lim_{t\rightarrow 0^+} t^{-m} m_G\otimes m_{t\nu,X}=\text{vol}(\mathcal{O}_{\nu}\cap S_X) m_G\otimes \delta_X$$
if $\mathcal{O}_X\subset \mathcal{N}_{\nu}$ is an open orbit.
Note that the support of the measure $m_{t\nu,X}$ is the precompact set $\mathcal{O}_{t\nu}\cap S_X$, and the precompact sets $\mathcal{O}_{t\nu}\cap S_X$ converge uniformly to $\delta_X$ by the above relationship between $\mathcal{O}_{\nu}\cap S_X$, $\mathcal{O}_{t\nu}\cap S_X$, and $\gamma_t$. Thus, it is enough to show $$t^{-m} \text{vol}(\mathcal{O}_{t\nu}\cap S_X)=\text{vol}(\mathcal{O}_{\nu}\cap S_X).$$
This follows from a straightforward computation utilizing the above definition of the measures $m_{t\nu,X}$.\\
\indent By Theorem 3.2 of \cite{BV}, the distribution $\widehat{\mathcal{O}_{\nu}}$ has an asymptotic expansion at the origin
$$t^r\widehat{\mathcal{O}_{t\nu}}\sim t^lD_l+\cdots$$
where $D_l$ is the leading term and $r$ is the number of positive roots of $G$. If we show $n=l-r$, then our limit will exist everywhere. If $n>l-r$, then the limit $\lim_{t\rightarrow 0^+} t^n\widehat{\mathcal{O}_{t\nu}}$ must be zero everywhere. However, we have seen that the Fourier transform of this limit is nonzero on $\mathcal{O}_X$ whenever $\mathcal{O}_X\subset \mathcal{N}_{\nu}$ is of maximal dimension.\\
\indent The limit $\lim_{t\rightarrow 0^+} t^{-l+r}\mathcal{O}_{t\nu}$ must exist and be nonzero. However, if $n<l-r$, then the homogeneity degree of this invariant distribution and Corollary 3.9 of \cite{BV} imply that such a distribution would have to be supported on orbits of dimension greater than $\dim \mathcal{N}_{\nu}$. But, this is impossible since the limit $\lim_{t\rightarrow 0^+} t^{-l+r}\mathcal{O}_{t\nu}$, if it exists, is clearly supported in $\mathcal{N}_{\nu}$. Hence, $n=l-r$ and the limit $\lim_{t\rightarrow 0^+}t^{-n}\mathcal{O}_{t\nu}$ exists.\\
\indent Now, let $k=\dim \mathcal{N}_{\nu}$, and let $\mathcal{N}_k$ be the union of nilpotent orbits of dimension at least $k$. We have shown that our desired limit formula holds on $\mathcal{N}_k$. However, in theory the limit could differ from $\displaystyle \sum_{\substack{\mathcal{O}_X\subset \mathcal{N}_{\nu}\\ \dim\mathcal{O}_X=\dim\mathcal{N}_{\nu}}} \text{vol}(\mathcal{O}_{\nu}\cap S_X)\mathcal{O}_X$ by a distribution $u$ supported on orbits of dimension less than $\mathcal{N}_{\nu}$. However, we deduce $u=0$ from Corollary 3.9 of \cite{BV} after checking the homogeneity degree of the terms in our limit formula.

\begin{corollary} [Barbasch and Vogan] Suppose $\mathfrak{h}\subset \mathfrak{g}$ is a Cartan, and let $C\subset (\mathfrak{h}^*)'$ be a connected component of the regular set. If $\nu,\lambda\in C$, then $\mathcal{N}_{\nu}=\mathcal{N}_{\lambda}$.
\end{corollary}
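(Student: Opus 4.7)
The plan is to reduce the statement, via Corollary 2.2(a), to the following single-slice claim: for each nilpotent $X \in \mathfrak{g}$, the set
$$E_X := C \cap (G \cdot S_X)$$
is either empty or all of $C$. Corollary 2.2(a) asserts $\mathcal{O}_X \subset \mathcal{N}_\mu$ iff $\mathcal{O}_\mu \cap S_X \neq \emptyset$ iff $\mu \in G \cdot S_X$, so the statement $\mathcal{N}_\nu = \mathcal{N}_\lambda$ for all $\nu, \lambda \in C$ is equivalent to each $E_X$ being either $\emptyset$ or all of $C$. Since $C$ is connected, it suffices to show $E_X$ is both open and closed in $C$.

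Openness is immediate: the paragraph before Lemma 2.1 records that $G \cdot S_X$ is open in $\mathfrak{g}^*$ (because $\phi: G \times S_X \to \mathfrak{g}^*$ is a submersion), so $E_X$ is relatively open in $C$.

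The main obstacle is closedness. Suppose $\nu_n \in E_X$ with $\nu_n \to \nu \in C$, and write $\nu_n = g_n \cdot \xi_n$ with $\xi_n \in S_X$. I would produce a subsequential limit $\xi_{n_k} \to \xi \in S_X \cap \mathcal{O}_\nu$, which yields $\nu \in G \cdot S_X$. Because each $\nu_n$ is regular semisimple, so is each $\xi_n$, and the tuple of adjoint invariants $\chi(\xi_n) = \chi(\nu_n)$ converges to the regular value $\chi(\nu)$, where $\chi: \mathfrak{g}^* \to \mathfrak{g}^*/\!/G$ is the adjoint quotient. The crucial technical input is properness of $\chi|_{S_X}$ over the regular locus. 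I expect this to follow from the Jacobson-Morozov grading on $Z_{\mathfrak{g}}(Y)$ (whose $\text{ad}(H)$-eigenvalues are all $\leq 0$) together with the contracting scaling $\gamma_t = \exp(-\tfrac{1}{2}\log(t)H)$ already used in the proof of Corollary 2.2: on the regular locus, the invariants bound a weighted norm on $S_X$ compatible with this grading, forcing elements with bounded invariants into a compact subset. Granted properness, a subsequence $\xi_{n_k} \to \xi \in S_X$ exists with $\chi(\xi) = \chi(\nu)$. Since $\nu$ is regular semisimple and the Cartan class of $\nu_n$ is preserved in the limit (the centralizers $Z_{\mathfrak{g}}(\nu_n)$ vary continuously), $\xi$ is Weyl-conjugate to $\nu$ in $\mathfrak{h}$, hence $\xi \in \mathcal{O}_\nu$, completing the closedness argument and the proof.
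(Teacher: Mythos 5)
Your reduction to the claim ``$E_X := C\cap (G\cdot S_X)$ is empty or all of $C$'' is correct (via Corollary 2.2(a)), and the openness half is fine. But the closedness argument has a genuine gap: the asserted properness of $\chi|_{S_X}$ over the regular locus is false in general. By Corollary 2.2(b)--(c) themselves, $\mathcal{O}_\nu\cap S_X$ is compact precisely when $\mathcal{O}_X$ is \emph{open} in $\mathcal{N}_\nu$; whenever $\mathcal{O}_X\subset \mathcal{N}_\nu$ is a non-open orbit (which does occur), the set $\mathcal{O}_\nu\cap S_X$ is unbounded, hence the fiber of $\chi|_{S_X}$ over the regular value $\chi(\nu)$ is unbounded and $\chi|_{S_X}$ is not proper there. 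Since $E_X$ includes exactly these $\nu$ for which $\mathcal{O}_X$ is merely contained in $\mathcal{N}_\nu$ (not necessarily open), you cannot in general extract a convergent subsequence of slice representatives $\xi_n$. Even if one restricts attention to $X$ with $\mathcal{O}_X$ open in $\mathcal{N}_{\nu_n}$ for each $n$ (so that each $\mathcal{O}_{\nu_n}\cap S_X$ is individually compact), you would still need a \emph{uniform} bound on these compact sets as $\nu_n\to\nu$, which is not supplied by the grading/contraction argument; and the final step identifying the limit $\xi$ with a point of $\mathcal{O}_\nu$ (as opposed to another real orbit sharing the same adjoint invariants) is also left unjustified. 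In short, the slice-compactness input you need is exactly what Corollary 2.4 is used to control in the rest of the paper, so the argument is circular in spirit without further input.

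For contrast, the paper's proof takes an entirely different route: it uses the limit formula of Corollary 2.3 to rewrite $\lim_{t\to 0^+} t^{-n}\mathcal{O}_{t\nu}$ as $\lim_{t\to 0^+}\frac{1}{n!}\partial(\nu)^n|_{t\nu}\mathcal{O}_{t\nu}$ on each $G\cdot S_X$ with $\mathcal{O}_X\subset\mathcal{N}_\nu$ open, then invokes Harish-Chandra's Lemma 22 to conclude that this limit agrees with $\lim_{t\to 0^+}\partial(\nu)^n|_{t\lambda}\mathcal{O}_{t\lambda}$ for any $\lambda$ in the same component $C$. Comparing supports forces each open orbit of $\mathcal{N}_\nu$ into $\mathcal{N}_\lambda$, and symmetry finishes the proof. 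That argument sidesteps any properness of the slice map entirely by working at the level of invariant distributions, which is where the control actually lives.
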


Suppose $\mathcal{O}_X\subset \mathcal{N}_{\nu}$ is an open orbit, let $n=\frac{1}{2}(\dim \mathcal{O}_{\nu}-\dim \mathcal{O}_X)$, and observe $\lim_{t\rightarrow 0^+} t^{-n}\mathcal{O}_{t\nu}=\lim_{t\rightarrow 0^+} \frac{1}{n!}\partial(\nu)^n|_{t\nu}\mathcal{O}_{t\nu}$ on $G\cdot S_X$ by the proof of Corollary 2.3. By Lemma 22 of \cite{HC}, $$\lim_{t\rightarrow 0^+} \partial(\nu)^n|_{t\nu}\mathcal{O}_{t\nu}=\lim_{t\rightarrow 0^+} \partial(\nu)^n|_{t\lambda}\mathcal{O}_{t\lambda}$$ on $G\cdot S_X$ if $\nu,\lambda\in C$. Clearly the support of $\lim_{t\rightarrow 0^+} \partial(\nu)^n|_{t\lambda}\mathcal{O}_{t\lambda}$ must be contained in $\mathcal{N}_{\lambda}$. Thus, the explicit formula for the limit on the left on $G\cdot S_X$ for open orbits $\mathcal{O}_X\subset \mathcal{N}_{\nu}$ in the proof of Corollary 2.3 then implies $\mathcal{O}_X\subset \mathcal{N}_{\lambda}$ whenever $\mathcal{O}_X\subset \mathcal{N}_{\nu}$ is open. Thus, we deduce $\mathcal{N}_{\nu}\subset \mathcal{N}_{\lambda}$. By symmetry we have equality.

\section{Rossmann's Character Formula}

\indent Let $V$ be a finite dimensional, real vector space, and let $\mu$ be a smooth, rapidly decreasing measure on $V$. Then the Fourier transform of $\mu$ is defined to be
$$\widehat{\mu}(l)=\int_V e^{i\langle l,X\rangle}d\mu(X).$$
Note $\widehat{\mu}$ is a Schwartz function on $V^*$. Given a tempered distribution $D$ on $V^*$, its Fourier transform $\widehat{D}$ is a tempered, generalized function on $V$ defined by 
$$\langle \widehat{D}, \mu\rangle:=\langle D,\widehat{\mu}\rangle.$$

\indent Let $G$ be a real, reductive algebraic group, and let $\pi$ be an irreducible, tempered representation of $G$ with character $\Theta_{\pi}$. Let $\theta_{\pi}$ be the Lie algebra analogue of the character of $\pi$. Rossmann associated to $\pi$ a finite union of regular, coadjoint orbits $\mathcal{O}_{\pi}\subset \mathfrak{g}^*$. He proved the following theorem \cite{R4}, \cite{R2}.

\begin{theorem} [Rossmann] As generalized functions, we have $$\widehat{\mathcal{O}_{\pi}}=\theta_{\pi}.$$
Here $\widehat{\mathcal{O}_{\pi}}$ denotes the Fourier transform of the canonical measure on $\mathcal{O}_{\pi}$. 
\end{theorem}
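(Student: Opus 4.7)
The strategy is to reduce the identity to the case of discrete series by parabolic induction, verify that case on a compact Cartan via Harish-Chandra's explicit character formula together with a localization computation of the orbit-side Fourier integral, and then extend the identity to all of $\mathfrak{g}^{*}$ using that both sides are invariant eigendistributions. Specifically, I would first assume that $G$ has a compact Cartan $T\subset G$ and let $\pi$ be a discrete series with Harish-Chandra parameter $\lambda\in i\mathfrak{t}^{*}$; take $\mathcal{O}_{\pi}=G\cdot \lambda$. On the regular set of $\mathfrak{t}$, Harish-Chandra's formula writes $\theta_{\pi}(X)$ as $\sum_{w\in W_{K}}\epsilon(w)e^{\langle w\lambda,X\rangle}$ divided by the standard Weyl denominator. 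On the other hand, the Hamiltonian $T$-action on $\mathcal{O}_{\pi}$ has isolated fixed points $\{w\lambda:w\in W_{K}\}$ with explicit weights on the normal bundles, and Duistermaat-Heckman localization (equivalently, stationary phase applied to Kirillov's universal character formula) evaluates $\widehat{\mathcal{O}_{\pi}}(X)$ as the same oscillatory sum, up to the Jacobian coming from the fibration $G/T\to \mathcal{O}_{\pi}$. Matching the two computations identifies $\theta_{\pi}$ and $\widehat{\mathcal{O}_{\pi}}$ on $\mathfrak{t}^{\text{reg}}$.

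To upgrade the equality on $\mathfrak{t}^{\text{reg}}$ to an equality of tempered distributions on $\mathfrak{g}$, I would use that both $\theta_{\pi}$ and $\widehat{\mathcal{O}_{\pi}}$ are $G$-invariant and are eigendistributions for $Z(U(\mathfrak{g}))$ with infinitesimal character $\lambda$: Harish-Chandra's regularity theorem then realizes each as a locally integrable function that is determined by its restriction to any single Cartan's regular set. For a general irreducible tempered $\pi$, the Langlands-Knapp-Zuckerman classification gives $\pi=\text{Ind}_{P}^{G}(\sigma)$ with $P=MAN$ a cuspidal parabolic and $\sigma$ a limit of discrete series of $MA$; Rossmann then defines $\mathcal{O}_{\pi}$ as the $G$-saturation of $\mathcal{O}_{\sigma}\subset \mathfrak{m}^{*}\oplus \mathfrak{a}^{*}$ in $\mathfrak{g}^{*}$. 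The van den Ban-Harish-Chandra character formula for parabolic induction matches the Fubini / moment-map formula for the Fourier transform of such a saturation, reducing the tempered case to the discrete series case applied to $M$.

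The technical heart of the argument is the localization step for the discrete series: showing that the symplectic oscillatory integral over $\mathcal{O}_{\pi}$ reproduces Harish-Chandra's Weyl-type sum. The fixed-point analysis and the identification of the normal-bundle weights with the appropriate positive roots are delicate, and the computation must be performed with enough uniformity to pass from a pointwise equality on a dense open subset of $\mathfrak{t}^{\text{reg}}$ to an equality of tempered generalized functions on all of $\mathfrak{g}$. The parabolic induction step introduces further bookkeeping, particularly the correct handling of limits of discrete series and of reducibly induced tempered $\pi$.
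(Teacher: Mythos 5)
The paper does not prove Theorem 3.1: it is quoted as a theorem of Rossmann, and the citations \cite{R4}, \cite{R2} stand in for a proof. So there is no in-paper argument to compare against, and the only question is whether your outline is a sound route to the result.

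Your overall plan---discrete series first, via a fixed-point evaluation of the orbital Fourier integral on a compact Cartan, then Harish-Chandra's regularity theorem to promote equality on $\mathfrak{t}^{\mathrm{reg}}$ to equality of invariant eigendistributions on $\mathfrak{g}$, then parabolic induction---is indeed the shape of the argument in the literature, so you have the right skeleton. But two places where you wave your hands are exactly where the content lies. First, $\mathcal{O}_{\pi}\cong G/T$ is noncompact and $\int_{\mathcal{O}_{\pi}}e^{i\langle\cdot,X\rangle}$ does not converge absolutely; it is a tempered distribution, and Duistermaat--Heckman localization in the form you invoke is a theorem about compact Hamiltonian $T$-manifolds. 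Making the stationary-phase evaluation rigorous here---showing $\widehat{\mathcal{O}_{\pi}}$ is a locally $L^{1}$ invariant eigendistribution, controlling the non-stationary contributions at infinity, establishing temperedness---is precisely Rossmann's analytic achievement and cannot be obtained by quoting DH. Second, the genuinely hard case is singular infinitesimal character, i.e.\ limits of discrete series. As the paper itself notes, in that case $\mathcal{O}_{\pi}$ is a finite union of \emph{regular} coadjoint orbits, not a singular orbit; your description of $\mathcal{O}_{\pi}$ as ``the $G$-saturation of $\mathcal{O}_{\sigma}$'' is therefore not correct when $\sigma$ is a limit of discrete series, and the right construction requires coherent-continuation and contour arguments---this is why \cite{R4} (``Limit Characters'') is a separate paper. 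Labeling these two issues as ``delicate'' and ``bookkeeping'' understates the gap; a complete proof must confront both head-on.
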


\indent If $\mathcal{O}_{\pi}=\mathcal{O}_{\nu}$ is a single orbit, then the leading term of the asymptotic expansion of $\widehat{\mathcal{O}_{\pi}}=\widehat{\mathcal{O}_{\nu}}$ is 
$$\sum_{\substack{\mathcal{O}_X\subset \mathcal{N}_{\nu}\\ \dim \mathcal{O}_X=\dim \mathcal{N}_{\nu}}} \text{vol}(\mathcal{O}_{\pi}\cap S_X)\widehat{\mathcal{O}_X}$$
by Corollary 2.3. More generally, suppose $\mathcal{O}_{\pi}=\cup \mathcal{O}_{\nu_i}$, and define $\operatorname{AS}(\pi)=\cup \mathcal{N}_{\nu_i}$. Then the leading term of $\widehat{\mathcal{O}_{\pi}}$ is a sum over the leading terms of the $\widehat{\mathcal{O}_{\nu_i}}$ of minimal degree. This is just the sum over $\text{vol}(\mathcal{O}_{\pi}\cap S_X)\widehat{\mathcal{O}_X}$ where $\mathcal{O}_X\subset \operatorname{AS}(\pi)$ varies over orbits of maximal dimension. Hence, we have shown 
$$\text{WF}(\pi)=\sum_{\substack{\mathcal{O}_X\subset \operatorname{AS}(\pi)\\ \dim \mathcal{O}_X=\dim \operatorname{AS}(\pi)}} \text{vol}(\mathcal{O}_{\pi}\cap S_X)\mathcal{O}_X.$$ 
\indent Thus, to prove Theorem 1.3, we need only show that $\mathcal{O}_X\subset \operatorname{AS}(\pi)$ is of maximal dimension iff $\mathcal{O}_{\pi}\cap S_X$ is precompact and non-empty. If $\mathcal{O}_X\subset \operatorname{AS}(\pi)$ is of maximal dimension, then $\mathcal{O}_X\cap \mathcal{N}_{\nu_i}$ is open for every $i$ and non-empty for at least one $i$. And by Corollary $2.2$, $\mathcal{O}_{\nu_i}\cap S_X$ is precompact for every $i$ and non-empty for at least one $i$. We conclude $\mathcal{O}_{\pi}\cap S_X$ is precompact and non-empty.\\
\indent Conversely, suppose $\mathcal{O}_{\pi}\cap S_X$ is precompact and non-empty. Then $\mathcal{O}_{\nu_i}\cap S_X$ is precompact for every $i$ and non-empty for some $i$. Corollary 2.2 tells us that $\mathcal{O}_X\subset \mathcal{N}_{\nu_i}$ is open for some $i$ and $\mathcal{O}_X\cap \mathcal{N}_{\nu_i}$ is open for every $i$. Hence, we have that $\mathcal{O}_X\subset \operatorname{AS}(\pi)$ is open. But, by Theorem $D$ of \cite{R3}, $\operatorname{AS}(\pi)$ is the closure of the union of the orbits in $\operatorname{AS}(\pi)$ of maximal dimension. Thus, $\mathcal{O}_X\subset \operatorname{AS}(\pi)$ open implies that $\mathcal{O}_X\subset \operatorname{AS}(\pi)$ is of maximal dimension.

\section{Proof of Theorem 1.1}

\indent In this section we prove Theorem 1.1. It is enough to prove the theorem when $\pi$ is an irreducible, tempered representation. Let $\mathcal{O}_X\subset \operatorname{AS}(\pi)$ be an open orbit, and identify $\mathfrak{g}\cong \mathfrak{g}^*$ via a $G$-equivariant isomorphism. Let $L\subset Z_G(X)$ be a Levi factor. Then there exists an $\mathfrak{sl}_2$-triple $\{X,H,Y\}$ such that $L=Z_G\{X,H,Y\}$. To prove the theorem, we must show
$$Z_G\{X,H,Y\}/Z(G)\ \text{is\ compact.}\ \ \ (*)$$\\
\indent Now, if $\mathcal{O}_{\pi}=\cup \mathcal{O}_{\nu_i}$, then $\operatorname{AS}(\pi)=\cup \mathcal{N}_{\nu_i}$. Any open orbit in $\operatorname{AS}(\pi)$ must be open in some $\mathcal{N}_{\nu_i}$. Hence, it is enough to prove $(*)$ whenever $\mathcal{O}_X$ is open in $\mathcal{N}_{\nu}$ for a regular element $\nu\in \mathfrak{g}^*$.\\ 
\indent Next, supplement A and supplement C of \cite{R1} imply that every regular orbit $\mathcal{O}_{\nu}$ can be written as a limit of regular semisimple orbits in the following sense. Let $\xi\in \mathfrak{g}^*\cong \mathfrak{g}$ be a semisimple element in the closure of $\mathcal{O}_{\nu}$, and let $\mathfrak{h}\subset \mathfrak{g}$ be a fundamental Cartan in $Z_{\mathfrak{g}}(\xi)$. Then there exists a connected component $C\subset (\mathfrak{h}^*)'$ such that 
$$\lim_{\substack{\lambda \in C\\ \lambda\rightarrow \xi}} \mathcal{O}_{\lambda}=\mathcal{O}_{\nu}.$$ Hence, $\mathcal{N}_{\nu}=\mathcal{N}_{\lambda}$ for some regular semisimple element $\lambda$, and it is enough to prove $(*)$ for $\mathcal{O}_X$ open in $\mathcal{N}_{\lambda}$ with $\lambda$ regular, semisimple.\\
\indent Fix $X$ and suppose $\mathcal{O}_X\subset \mathcal{N}_{\lambda}$ is open for some $\lambda$ regular semisimple. By Corollary 2.2, $\mathcal{O}_{\lambda}\cap S_X$ is compact and non-empty. Now, $L=Z_G\{X,H,Y\}$ acts on this space, and $L$ must have at least one closed orbit (for instance, one can take an orbit of minimal dimension). Without loss of generality, we make it $L\cdot \lambda\subset \mathcal{O}_{\lambda}\cap S_X$ . Choose a Cartan $\mathfrak{h}$ and a component of the regular set $C\subset (\mathfrak{h}^*)'$ such that $\lambda\in C$. If $\xi\in U_1=G\cdot C$, then $\mathcal{O}_X\subset \mathcal{N}_{\lambda}=\mathcal{N}_{\xi}$ is open by Corollary 2.4. It then follows from Corollary 2.2 that $\mathcal{O}_{\xi}\cap S_X$ is compact for all $\xi$ in the open set $U_1$. Define $U=U_1\cap S_X$, an open subset of $S_X$.\\ 
\indent Now, $L/Z_G(\lambda)\cong L\cdot \lambda\subset \mathcal{O}_{\lambda}\cap S_X$ is a closed subset of a compact set; hence, $L/Z_G(\lambda)$ is compact. Note that $Z_G(\lambda)\subset G$ is a Cartan since $\lambda$ is regular, semisimple. Thus, $Z_L(\lambda)\subset Z_G(\lambda)$ is abelian and consists of semisimple elements. Hence, the connected component of the identity $Z_L(\lambda)_0$ must be contained in a Cartan $B$ of $L$. We have seen that $L/Z_L(\lambda)$ is compact; hence, $L/Z_L(\lambda)_0$ is compact because $Z_L(\lambda)$ has finitely many components. This implies $L/B$ is compact and finally 
$$L/Z(L)$$ is compact since semisimple groups are compact iff they are compact modulo a Cartan.\\
\indent Because the fibers of the projection 
$$L/Z(G)\rightarrow L/Z(L)$$
are homeomorphic to $Z(L)/Z(G)$, to show that $L/Z(G)$ is compact, it is enough to show $Z(L)/Z(G)$ is compact.\\
The following lemma is the key step in proving that $Z(L)/Z(G)$ is compact.

\begin{lemma} Let $Z(\mathfrak{l})$ denote the center of $\mathfrak{l}=\text{Lie}(L)$, and let $Z(\mathfrak{g})$ denote the center of $\mathfrak{g}$. Then
$$\bigcap_{\xi\in U}(Z(\mathfrak{l})\cap Z_{\mathfrak{l}}(\xi))=Z(\mathfrak{g}).$$
\end{lemma}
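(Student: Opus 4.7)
My plan is to prove the two inclusions separately. The inclusion $Z(\mathfrak{g})\subset \bigcap_{\xi\in U}(Z(\mathfrak{l})\cap Z_{\mathfrak{l}}(\xi))$ is immediate, since a central element of $\mathfrak{g}$ lies in $\mathfrak{l}$ and commutes with everything in $\mathfrak{g}$, in particular with each $\xi\in U$. So the real content is the reverse inclusion: if $Z\in Z(\mathfrak{l})$ satisfies $[Z,\xi]=0$ for every $\xi\in U$, then $Z\in Z(\mathfrak{g})$.

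The first step will be to exploit the openness of $U$ in $S_X=X+Z_{\mathfrak{g}}(Y)$ to upgrade the assumption to the statement that $Z$ commutes with all of $Z_{\mathfrak{g}}(Y)$. Writing $U=X+V$ with $V$ a nonempty open subset of the real vector space $Z_{\mathfrak{g}}(Y)$, the condition $[Z,X+v]=0$ together with $[Z,X]=0$ (which holds because $Z\in \mathfrak{l}=Z_{\mathfrak{g}}\{X,H,Y\}$) gives $[Z,v]=0$ for every $v\in V$. Since a nonempty open set in a finite-dimensional real vector space affinely spans the whole space and $\text{ad}_Z$ is linear, one concludes $[Z,w]=0$ for every $w\in Z_{\mathfrak{g}}(Y)$.

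The second step is a standard $\mathfrak{sl}_2$-module argument on $\mathfrak{g}$. Under the adjoint action of the triple $\{X,H,Y\}$, the Lie algebra $\mathfrak{g}$ decomposes into a direct sum of finite-dimensional irreducible $\mathfrak{sl}_2$-modules, and each irreducible summand is generated from its lowest weight vector (an element killed by $\text{ad}_Y$, i.e., an element of $Z_{\mathfrak{g}}(Y)$) by repeated application of $\text{ad}_X$. Therefore $\mathfrak{g}$ is spanned, as a vector space, by elements of the form $\text{ad}_X^k(w)$ with $w\in Z_{\mathfrak{g}}(Y)$ and $k\geq 0$. Because $[Z,X]=0$, the Jacobi identity gives $[Z,\text{ad}_X^k(w)]=\text{ad}_X^k([Z,w])$ by induction on $k$, and the first step ensures $[Z,w]=0$. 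Hence $\text{ad}_Z$ vanishes on a spanning set of $\mathfrak{g}$, and therefore on all of $\mathfrak{g}$, proving $Z\in Z(\mathfrak{g})$.

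The only nontrivial step is the passage from $U$ to all of $Z_{\mathfrak{g}}(Y)$, and this is where the hypothesis that the intersection runs over an open subset of the affine slice (rather than over a single point) is used; the rest is a linear algebra consequence of the structure of finite-dimensional $\mathfrak{sl}_2$-representations. I do not anticipate a significant obstacle beyond cleanly articulating the openness argument.
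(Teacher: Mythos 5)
Your proof is correct and follows essentially the same route as the paper's: both reduce to showing that the centralizer of $Z$ contains $Z_{\mathfrak{g}}(Y)$ via the openness of $U-X$ in $Z_{\mathfrak{g}}(Y)$, and then conclude $Z_{\mathfrak{g}}(Z)=\mathfrak{g}$ by the $\mathfrak{sl}_2$ lowest-weight-vector generation argument. Your explicit computation $[Z,\mathrm{ad}_X^k(w)]=\mathrm{ad}_X^k([Z,w])$ is just an unwinding of the paper's observation that $Z_{\mathfrak{g}}(Z)$ is an $\mathfrak{sl}_2$-submodule.
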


\begin{proof} Clearly the right hand side is contained in the left hand side. To show the other direction, suppose $W\in (Z(\mathfrak{l})\cap Z_{\mathfrak{l}}(\xi))$ for all $\xi\in U$. We will show $W\in Z(\mathfrak{g})$. Since $W\in \mathfrak{l}$, we know $X,H,Y\in Z_{\mathfrak{g}}(W)$. Further, $Z_{\mathfrak{g}}(W)\cap Z_{\mathfrak{g}}(Y)\subset Z_{\mathfrak{g}}(Y)$ is a vector subspace containing $U-X$ since $W\in Z_{\mathfrak{l}}(\xi)$ for $\xi\in U$ and $X\in Z_{\mathfrak{g}}(W)$. Since $U-X\subset Z_{\mathfrak{g}}(Y)$ is an open subset, we must have 
$$Z_{\mathfrak{g}}(W)\supset Z_{\mathfrak{g}}(Y).$$
\noindent Now, view $\mathfrak{g}$ as a finite dimensional module for $\text{Span}_{\mathbb{R}}\{X,H,Y\}\cong \mathfrak{sl}_2\mathbb{R}$. Note $Z_{\mathfrak{g}}(W)\subset \mathfrak{g}$ is a subalgebra and a submodule for $\text{Span}_{\mathbb{R}}\{X,H,Y\}$ since $X,H,Y\in Z_{\mathfrak{g}}(W)$. But, the lowest weight vectors of each irreducible summand of $\mathfrak{g}$ are in $Z_{\mathfrak{g}}(W)$ since $Z_{\mathfrak{g}}(W)\supset Z_{\mathfrak{g}}(Y)$, and the lowest weight vectors of any finite dimensional $\mathfrak{sl}_2$ module generate the entire module. Thus, $Z_{\mathfrak{g}}(W)=\mathfrak{g}$ and $W\in Z(\mathfrak{g})$ as desired.
\end{proof}

Before we get back to showing that $Z(L)/Z(G)$ is compact, we need two general remarks. First, suppose $A$ is an abelian, real algebraic group, suppose $\phi:\ A\rightarrow \operatorname{Aut}(V)$ is a representation of $A$ on a real vector space $V$, and suppose $S\subset V$ is a compact $A$-stable subset of $V$. Then $A$ acts on $S$ with compact orbits. This can be proved as follows. After complexifying the representation, we may diagonalize the action of the image $\phi(A)$ since it is abelian and consists of semisimple elements. Now, $A$ must be isomorphic to a product of copies of $S^1$, $\mathbb{R}^{\times}$, and $\mathbb{C}^{\times}$. Using that every one dimensional character of these groups has either compact or unbounded image in $\mathbb{C}$, we deduce that every orbit of $A$ on $V\otimes \mathbb{C}$ is either compact or unbounded. In particular, $A$ must act on a compact $S\subset V$ with compact orbits.\\
\\
\indent Second, if $A$ is an abelian, real algebraic group and $A_1,A_2$ are cocompact, algebraic, closed subgroups, then $A_1\cap A_2$ is cocompact in $A$. This is because the fibers of the map
$$A/(A_1\cap A_2)\rightarrow A/A_1$$ are homeomorphic to $A_1/(A_1\cap A_2)\cong A_1A_2/A_2$, which is compact because it is a closed subset of $A/A_2$. More generally, if $A_1,\ldots,A_n$ is a finite collection of cocompact, algebraic, closed subgroups of a real, abelian algebraic group $A$, then $$A/\cap_{i=1}^n A_i$$ is compact.\\
\\
\indent Now, back to the proof that $Z(L)/Z(G)$ is compact. By the first remark, $Z(L)$ acts on $\mathcal{O}_{\xi}\cap S_X$ with compact orbits for every $\xi\in U$. In particular, $Z(L)/(Z(L)\cap Z_L(\xi))$ is compact for all $\xi\in U$. In the above lemma, we showed
$$\bigcap_{\xi\in U}(Z(\mathfrak{l})\cap Z_{\mathfrak{l}}(\xi))=Z(\mathfrak{g}).$$
However, one can clearly choose $\xi_1,\ldots,\xi_k\in U$ such that the identity still holds when taking the intersection over this finite set. Then, by the second remark,   
$$Z(L)/\bigcap_{i=1}^k(Z(L)\cap Z_L(\xi_i))$$ is compact. Since $\bigcap_{i=1}^k(Z(L)\cap Z_L(\xi_i))$ is a real algebraic group, it has a finite number of connected components and $$Z(L)/(\bigcap_{i=1}^k(Z(L)\cap Z_L(\xi_i)))_0$$ is also compact where $(\bigcap_{i=1}^k(Z(L)\cap Z_L(\xi_i)))_0$ denotes the identity component. But, since $\bigcap_{i=1}^k(Z(L)\cap Z_L(\xi_i))$ and $Z(G)$ share a Lie algebra, $Z(L)/Z(G)$ is a quotient of $$Z(L)/(\bigcap_{i=1}^k(Z(L)\cap Z_L(\xi_i)))_0.$$ Thus, $Z(L)/Z(G)$ is compact. This completes the proof of Theorem 1.1.

\section{Two Examples}
\noindent In this section, we give two examples. The author learned these examples from David Vogan.

\begin{example} Nilpotent orbits in $\text{U}(2,2)$ can be parametrized by signed Young diagrams of signature $(2,2)$ (see page 140 of \cite{CM}). A nilpotent orbit of $\text{U}(2,2)$ is real distinguished iff it corresponds to a signed Young diagram for which any two rows of equal length begin with the same sign. One can read this off of an explicit description of the Levi factor of the centralizer of a nilpotent element in the corresponding nilpotent orbit, which in turn can be easily computed from Remark 5.1.18 on page 74 of \cite{CM} and the discussion on pages 139-140 of \cite{CM}.\\
On the other hand, which nilpotent orbits of $\text{U}(2,2)$ occur in the wave front cycle of a discrete series can be read off from Example 2.6.7 on page 363 of \cite{Y}. In Yamamoto's table, the far left column is filled in by sequences of numbers and signs. The entry corresponds to a discrete series representation if the sequence contains only signs. One then observes from Yamamoto's table that the nilpotent orbits corresponding to discrete series for $\text{U}(2,2)$ are precisely the real distinguished nilpotent orbits of $\text{U}(2,2)$.\\
Of course, Yamamoto is really computing the associated variety of a discrete series representation. That the associated variety of the representation corresponds to the wave front cycle of the representation under the Kostant-Sekiguchi correspondence is a deep result of \cite{SV}.
\end{example}

\begin{example} Nilpotent orbits in $\text{Sp}(4,\mathbb{C})$ can be parametrized by partitions of four whose odd parts occur with even multiplicity (see page 70 of \cite{CM}). Of these orbits, the principal orbit (corresponding to the trivial partition $4$) and the subregular orbit (corresponding to the partition $2^2$) are the real distinguished ones as can be read off of page 88 of \cite{CM}.\\
However, the wave front cycle of any tempered representations of $\text{Sp}(4,\mathbb{C})$ contains only the principal orbit. Therefore, in this case, the set of nilpotent orbits corresponding to tempered representations is a proper subset of the set of real distinguished nilpotent orbits.
\end{example}

\section{Acknowledgments}
\noindent The author would like to thank his advisor, David Vogan, for suggesting the relationship between tempered representations and nilpotent orbits that is explored in this paper. The author would also like to thank David Vogan for many helpful comments and corrections. The author would like to thank Esther Galina for pointing out a mistake in a remark relating the results of this paper with the results of No\"{e}l. We have fixed this error in this draft.

\bibliographystyle{amsplain}

\end{document}